\newtheorem{thm}{Theorem}
\newtheorem{defn}[thm]{Definition}
\newtheorem{lemma}[thm]{Lemma}
\newtheorem{conj}[thm]{Conjecture}
\newtheorem{cor}[thm]{Corollary}
\newtheorem{rem}[thm]{Remark}
\newcommand{\one}{\underline{\bold{1}}}
\newcommand{\mynull}{ \tiny $\times$ \normalsize }
\begin{document}




\title{New Upper Bounds on the Minimal Domination Numbers of High-Dimensional Hypercubes}


\author{Zachary DeVivo\footnote{University of Michigan, zdevivo@umich.edu} \, and Robert K. Hladky\footnote{American Mathematical Society, Mathematical Reviews, rkh@ams.org, corresponding author} }

            


\maketitle

\begin{abstract} We briefly review known results on upper bounds for the minimal domination number $\gamma_n$ of a hypercube of dimension $n$, then present a new method for constructing dominating sets. 

Write $n =2^{\widehat{n}}-1 +{\widecheck{n}}$ with $0\leq {\widecheck{n}}<2^{\widehat{n}}$. Our construction applies to all $n$ lying within the expanding wedge $\theta({\widehat{n}}) \leq {\widecheck{n}} < 2^{{\widehat{n}}}$, where $\theta$ is a specific, easily computable function with the asymptotic property $\theta(a) \sim 2^{a/2}$.  For all $n$ within the smaller wedge  $\theta({\widehat{n}}) \leq {\widecheck{n}} < 2^{{\widehat{n}}-2}$, the resulting upper bound on $\gamma_n$ betters those previously known.

\vspace{2pt}
\noindent Keywords: \textit{Minimal Domination Number,  Hypercubes,  Binary Codes}

\noindent MSC 2020: \textit{05C69,  94B05}
\end{abstract}





\section{Background and Previous Bounds}

Trying to determine the optimal domination number for hypercubes is a surprisingly formidable task with several intriguing properties: (i) the problem is simple to state and accessible to those without extensive mathematical training; (ii) there is still scope for considerable progress; (iii) even the known solutions in the low-dimensional cases are somewhat non-intuitive. If you've never done so before, constructing minimal dominating sets for $n=5,6$ can be done by hand, but is a more challenging task than one might expect. Solutions for the families $n = 2^{\widehat{n}} -1$ and $n = 2^{\widehat{n}}$ are well known and it has been proved that $\gamma_9= 62$.  Surprisingly, the problem is open for $n\geq 10$ outside of the above two families.  Even for $n=9$, there is no satisfying or intuitive construction;  an explicit minimal dominating set of length $62$ was found via simulated annealing in \cite{Wille96}, and was proven optimal in \cite{MR1873942}.

The $n$-dimensional hypercube $\mathcal{Q}_n$ has multiple equivalent definitions arising from its appearance in various sub-disciplines of mathematics. For instance, it can be described as
(i) the $n$-dimensional vector space over the field $\mathbb{F}_2$, equipped with the standard Euclidean norm;
(ii)  the metric space of binary words of length $n$, equipped with the Hamming distance, i.e., $d(x,y) =\left| \{ i \big| x_i \ne y_i \}\right|$;
(iii) the power set $2^{\{1,\dots,n\}}$ with distance determined by the the size of the symmetric difference, i.e.,   $d(A,B) = \left|  A \ominus B\right|$;
(iv) the recursively defined graph $\mathcal{Q}_n = K_2 \square \mathcal{Q}_{n-1}$, where $K_2$ is the graph with $2$ vertices connected by a single edge and $\square$ is the Cartesian product of graphs.

A subset $\Delta \subset \mathcal{Q}_n$ is said to be \textit{dominating of degree} $r$ if 
\[ \mathcal{D}_r (\Delta) \coloneq \{  y \in \mathcal{Q}_n \big| \exists x \in \Delta  \text{ such that } \|x-y\|)\leq r\} = \mathcal{Q}_n.\]
In the computer science literature, which predominantly uses the binary word formulation, dominating sets are called a binary codes with covering radius $r$. The \textit{minimal domination number} $\gamma^{(r)}_n$ is defined to be the minimum of $\left| \Delta \right|$ over all dominating sets $\Delta$. A dominating set of size $\gamma^{(r)}_n$ is said to be \textit{minimal}.
\par
 In this paper, we shall solely be concerned with the case $r=1$, write $\mathcal{D}$ for  $\mathcal{D}_1$ and $\gamma_n$  for $\gamma_n^{(1)}$, respectively. Even in this case, the precise value of $\gamma_n$ is only known for $n \leq 9$ and for the families $n = 2^k, 2^k-1$.  However, the bounds 
\[ 2^{n- {\widehat{n}} }  \geq   \gamma_n  \geq  \frac{2^n}{n+ \one(\text{$n$ odd})} =  2^n \cdot \begin{cases} 1/n, \quad & \text{$n$ even,}\\ 1/(n+1), \quad  & \text{$n$ odd,} \end{cases} \]
have been known since \cite{MR945316}, where the even (and significantly harder) case was proved. Here, we are using the notation
\begin{equation}  n = 2^{\widehat{n}} -1 + {\widecheck{n}}, \quad 0 \leq {\widecheck{n}} < 2^{\widehat{n}},
\end{equation}
and $\one$ represents the binary truth function, i.e. it takes the value $1$ if its input is true and $0$ otherwise. When ${\widecheck{n}} = 0,1$, i.e., $n =2^{\widehat{n}}-1$ or $2^{\widehat{n}}$, the lower and upper bounds are easily seen to be equal and hence sharp. 
\par
The additional properties (i) $\gamma_n \leq \gamma_{n+m} \leq 2^m  \gamma_n$, and  (ii) $\gamma_{2n+1} \leq 2^n \gamma_n$ are also well-known.  (i) is trivial.  (ii), which dates back at least to \cite{mallard}, requires a little cleverness. Because the details of the proof,  not just the result, will be important later, we state it as a lemma.  Here, we use the vector space formulation of $\mathcal{Q}_n$, and so all addition operators must be interpreted in the $\mathbb{F}_2$ context. 

\begin{lemma}\label{propagate}
For all positive integers $n$, $\gamma_{2n+1} \leq 2^n  \gamma_n$.
\end{lemma}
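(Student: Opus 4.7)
The plan is to lift a dominating set $D \subseteq \mathcal{Q}_n$ of minimum size $\gamma_n$ to a dominating set $\Delta \subseteq \mathcal{Q}_{2n+1}$ of size $2^n\gamma_n$. Writing elements of $\mathcal{Q}_{2n+1}$ as triples $(x,y,z)$ with $x,y \in \mathbb{F}_2^n$ and $z \in \mathbb{F}_2$, I would set
\[
\Delta \;=\; \{\,(u,\, u+d,\, p(u)) : u \in \mathbb{F}_2^n,\ d \in D\,\},
\]
where $p(u) = \sum_{i=1}^n u_i \pmod 2$ is the parity function. Because $u$ is read off the first block and $d$ is then recovered from the second, the map $(u,d) \mapsto (u,\,u+d,\,p(u))$ is injective, so $|\Delta| = 2^n|D| = 2^n\gamma_n$, matching the target count exactly.

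The structural fact I plan to lean on is that the parity flips under every unit-vector shift, $p(u+e_i) = p(u) + 1$; this will let a single witness $d_0 \in D$ near the target produce a pair of candidate dominators whose third coordinates between them exhaust $\mathbb{F}_2$.

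To verify that $\Delta$ dominates, I fix a target $(x,y,z)$ and set $s = x+y$. If $s \in D$, the element $(x,\,y,\,p(x)) \in \Delta$ arising from $(u,d)=(x,s)$ already lies within distance $1$ of $(x,y,z)$. If $s \notin D$, then since $D$ dominates $\mathcal{Q}_n$ there is some $d_0 = s + e_i \in D$, and I compare $(x,y,z)$ to the two members of $\Delta$ obtained from $(u,d) = (x,d_0)$ and $(u,d) = (x+e_i,d_0)$: these unpack to $(x,\,y+e_i,\,p(x))$ and $(x+e_i,\,y,\,p(x)+1)$, each at Hamming distance $1$ from $(x,y,z)$ in the first two blocks and having third coordinates that together cover $\{0,1\}$. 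Exactly one of them matches $z$ and therefore sits at total distance $1$.

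The only genuinely non-mechanical step here is guessing the right shape of $\Delta$: that the matching against $D$ should be done on the sum $x+y$ rather than on $x$ or $y$ separately, and that the parity of $u$ is precisely the function needed to neutralize the extra $z$-coordinate, because swapping a single bit between the first two blocks automatically flips $p(u)$. Once the parametrization is in place, the dominance check is a direct three-line computation, so I do not anticipate any further obstacle.
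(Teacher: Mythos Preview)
Your proof is correct and is essentially identical to the paper's: both construct the set $\{(u,\,u+d,\,p(u)) : u\in\mathcal{Q}_n,\ d\in D\}$ with $p$ the parity map, and both dominate a target $(x,y,z)$ by applying $D$ to $x+y$ and then using the parity flip under a single-coordinate shift to hit the required $z$. The only cosmetic difference is that you split into the cases $x+y\in D$ and $x+y\notin D$, whereas the paper writes $z-y=\sigma+\delta$ with $\|\delta\|\le 1$ and treats both at once.
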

\begin{proof} Let $\Delta$ be a dominating set for $\mathcal{Q}_n$ and  $\pi \colon \mathcal{Q}_n \to \mathbb{F}_2$ be the $\mathbb{F}_2$-linear map $\pi(x_1,\dots,x_n) = \sum x_i$. Now set  \[ \widetilde{\Delta} = \{  ( x, x+ \sigma , \pi(x)) \big| x \in \mathcal{Q}_n, \sigma \in \Delta\}  \subset \mathcal{Q}_n \times \mathcal{Q}_n \times \mathcal{Q}_1 \cong \mathcal{Q}_{2n+1}. \]
Pick any  $(y,z,\epsilon) \in \mathcal{Q}_n \times \mathcal{Q}_n \times \mathcal{Q}_1$. By definition, there are $\sigma \in \Delta$ and $\delta \in \mathcal{Q}_n$ with $\|\delta\| \leq 1$ such that $z-y = \sigma +\delta$. Then it is easy to see that one of  \[ ( y ,  y+\sigma, \pi(y)) = ( y,  z -\delta, \pi(y)), \quad  (y+ \delta,   y+\delta+\sigma,, \pi(y+\delta)) =(y+\delta, z, \pi(y)+1),\] which are both contained in $\widetilde{\Delta}$, must lie within distance one of $(y,z,\epsilon)$. Noting that $\big|\widetilde{\Delta} \big| = 2^n \left|\Delta\right|$ completes the proof.
\end{proof} 

\begin{rem}
The above construction preserves many additional properties that appear elsewhere in the literature, e.g. \textit{independence}, where the sets considered are not permitted to contain adjacent vertices. Hence $\gamma^i_{2n+1} \leq 2^n \gamma^i_n$ also. Most of the results of this section, go through in these cases without much modification.
\end{rem}

One immediate consequence is the existence of minimal dominating sets with special structure in the case $\widecheck{n}=0$. This elementary observation will actually underpin our new bounds in later sections.
\begin{cor}\label{graph}
 For every $\widehat{n} >1$, there is an $\mathbb{F}_2$-linear map $h \colon \mathcal{Q}_{2^{\widehat{n}} -1 -\widehat{n}} \to \mathcal{Q}_{\widehat{n}}$ such that the graph $\Gamma = \{ (x,h(x)) \big| x \in \mathcal{Q}_{2^{\widehat{n}} -1 -\widehat{n}} \}$ is a minimal dominating set for $ \mathcal{Q}_{2^{\widehat{n}} -1}$.
\end{cor}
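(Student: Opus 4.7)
The plan is to proceed by induction on $\widehat{n} \geq 2$, applying the \emph{construction} in the proof of Lemma~\ref{propagate} (not merely the inequality it yields). For the base case $\widehat{n}=2$ we have $n=3$ and $n-\widehat{n}=1$; the $\mathbb{F}_2$-linear map $h\colon \mathcal{Q}_1 \to \mathcal{Q}_2$ given by $h(x)=(x,x)$ produces $\Gamma = \{(0,0,0),(1,1,1)\}$, and a direct check that the closed Hamming balls of radius one about these two points exhaust $\mathcal{Q}_3$ confirms that $\Gamma$ is a minimal dominating set (matching $\gamma_3=2$).

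For the inductive step, suppose $\Delta = \{(y,h(y)) : y \in \mathcal{Q}_{n-\widehat{n}}\}$ is a minimal dominating set for $\mathcal{Q}_n$ with $n = 2^{\widehat{n}}-1$, arising as the graph of a linear $h$. Feeding $\Delta$ into the Lemma~\ref{propagate} construction yields
\[
\widetilde{\Delta} = \{(x,\; x+\sigma,\; \pi(x)) : x \in \mathcal{Q}_n,\; \sigma \in \Delta\} \subset \mathcal{Q}_{2n+1}.
\]
The essential bookkeeping step is to decompose $x = (u,v) \in \mathcal{Q}_{n-\widehat{n}} \times \mathcal{Q}_{\widehat{n}}$, write $\sigma = (w, h(w))$, and reparametrize using $u' = u+w$ (which ranges freely over $\mathcal{Q}_{n-\widehat{n}}$ as $w$ does); by $\mathbb{F}_2$-linearity of $h$, we have $h(w) = h(u)+h(u')$. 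A generic element of $\widetilde{\Delta}$ then takes the form
\[
(u,\; v,\; u',\; v + h(u) + h(u'),\; \pi(u)+\pi(v)),
\]
with $(u,v,u')$ ranging freely over $\mathcal{Q}_{2n-\widehat{n}}$ and the trailing $\widehat{n}+1$ coordinates being explicit $\mathbb{F}_2$-linear functions of $(u,v,u')$.

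The pleasant observation is that, in the natural coordinate ordering inherited from $\mathcal{Q}_n \times \mathcal{Q}_n \times \mathcal{Q}_1$, the free block already occupies the first $2n-\widehat{n}$ positions and the dependent block the last $\widehat{n}+1$, so \emph{no} coordinate permutation is required: $\widetilde{\Delta}$ is manifestly the graph of an $\mathbb{F}_2$-linear map $\tilde{h}\colon \mathcal{Q}_{2n-\widehat{n}} \to \mathcal{Q}_{\widehat{n}+1}$. Setting $n' = 2n+1 = 2^{\widehat{n}+1}-1$, one checks $2n-\widehat{n} = n' - (\widehat{n}+1)$ and $|\widetilde{\Delta}| = 2^{n'-(\widehat{n}+1)}$, which coincides with the Hamming lower bound $2^{n'}/(n'+1)$; hence minimality is automatic from the size. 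The only point requiring care is recognizing that the substitution $u' = u+w$ is exactly what permits the propagation construction to preserve the graph-of-linear-map structure with free coordinates packaged contiguously; beyond this, the verification is a routine dimension count.
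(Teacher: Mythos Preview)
Your proof is correct and follows essentially the same route as the paper: iterate the construction in Lemma~\ref{propagate} and observe that the result is the graph of an $\mathbb{F}_2$-linear map. The only noteworthy difference is that the paper says one should ``at every stage permute the terms involving $\pi$ to the end,'' whereas your reparametrization $u' = u+w$ shows that in the natural coordinate ordering the free block already occupies the first $2n-\widehat{n}$ positions, so no permutation is in fact required; this is a mild sharpening of the paper's sketch rather than a different argument.
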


The proof  is a simple exercise in iterating the construction of Lemma \ref{propagate}, beginning with the elementary dominating set $\{0\}$ for $\mathcal{Q}_1$, and at every stage permuting the terms involving $\pi$ to the end. Finding an exact expression for $h$ would require elementary but irritating linear algebra, but the existence of such an $h$ is clear.

The true power of Lemma \ref{propagate} becomes more evident if we make two more definitions. First, we define the \textit{multiplicative gain in dimension $n$} by 
\begin{equation}
\chi_n :=1 - \gamma_n \cdot 2^{ \widehat{n} -n } \qquad \text{(and so $\gamma_n = (1-\chi_n) 2^{ n- \widehat{n} }$ )}.
\end{equation}
Second, for any positive integer, the \textit{domination wedge} with vertex $n$ is defined to be
\[ W(n) := \left\{ p \in \mathbb{Z} \Big|    (n+1)  \leq \frac{ p+1}{2^l}  < 2^{{\widehat{n}}+1 }   \text{ for some $l\geq0$}\right\}.\]
The statement $p \in W(n)$ is easily seen to be equivalent to $ \widehat{p} = {\widehat{n}} + l$  and $\widecheck{p} \geq 2^l \cdot {\widecheck{n}} $ for some $l \geq 0$.  All known results for $\gamma$ can easily be restated in terms of $\chi$. In particular Lemma \ref{propagate} can then be restated as the third part of following theorem, which simply lists known properties of $\chi$. 

\begin{thm}\label{wedge}\hfill
\begin{enumerate}[(1)]
\item $\chi_n =0$ if ${\widecheck{n}} =0,1$,
\item $ 0 \leq \chi_n  \leq  1 -  \frac{2^{\widehat{n}}}{n+ \one(\text{$n$ odd})}  < \frac{1}{2}$,
\item If $ m \in W(n)$ then $\chi_m \geq \chi_n$.
\end{enumerate}
\end{thm}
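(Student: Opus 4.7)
\bigskip

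\noindent\textbf{Proof proposal.} The plan is to treat (1) and (2) as direct bookkeeping translations of the bounds already recalled in the excerpt, and to treat (3) as an iteration of Lemma \ref{propagate} combined with the trivial monotonicity $\gamma_n\leq \gamma_{n+1}$ (more generally $\gamma_{n+j}\leq 2^j\gamma_n$).

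For (1), I would just substitute $\widecheck{n}=0$ or $\widecheck{n}=1$ into the known lower bound $2^n/(n+\one(n\text{ odd}))$ and check it coincides with the known upper bound $2^{n-\widehat{n}}$; in either case $n+\one(n\text{ odd})$ equals $2^{\widehat{n}}$, so $\gamma_n=2^{n-\widehat{n}}$ and hence $\chi_n=0$. For the double inequality in (2), the bounds on $\gamma_n$ in the excerpt rearrange verbatim to $0\leq\chi_n\leq 1-2^{\widehat{n}}/(n+\one(n\text{ odd}))$; the strict upper bound $<1/2$ then reduces to checking $n+\one(n\text{ odd})<2^{\widehat{n}+1}$, which is immediate from $\widecheck{n}<2^{\widehat{n}}$ (I would split into the two parities of $n$ for cleanliness).

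The substantive part is (3). First I would re-express Lemma \ref{propagate} in terms of $\chi$: when $m=2n+1$, writing $n=2^{\widehat{n}}-1+\widecheck{n}$ yields $m=2^{\widehat{n}+1}-1+2\widecheck{n}$, so $\widehat{m}=\widehat{n}+1$ and $\widecheck{m}=2\widecheck{n}$, and the powers of $2$ in $\gamma_{2n+1}\leq 2^n\gamma_n$ cancel to leave exactly $\chi_{2n+1}\geq\chi_n$. Iterating this $l$ times starting from $n$ produces $n':=2^l(n+1)-1$, which satisfies $\widehat{n'}=\widehat{n}+l$ and $\widecheck{n'}=2^l\widecheck{n}$, and gives the chain $\chi_{n'}\geq\chi_{n_{l-1}}\geq\cdots\geq\chi_n$.

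Second, given any $m\in W(n)$, the definition supplies an $l\geq 0$ with $\widehat{m}=\widehat{n}+l$ and $\widecheck{m}\geq 2^l\widecheck{n}$. Taking $n'$ as above, we have $\widehat{m}=\widehat{n'}$ and $m\geq n'$. I would then apply property (i), $\gamma_m\leq 2^{m-n'}\gamma_{n'}$; because $\widehat{m}=\widehat{n'}$, the factor $2^{m-n'}$ precisely offsets the difference between the $2^{m-\widehat{m}}$ and $2^{n'-\widehat{n'}}$ in the definition of $\chi$, so this inequality collapses to $\chi_m\geq\chi_{n'}$. Combining with the first step gives $\chi_m\geq\chi_{n'}\geq\chi_n$, as required.

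The only real hazard is clerical: one must be careful that every application of Lemma \ref{propagate} along the chain $n\mapsto 2n+1$ lands in the regime where $\widecheck{\phantom{n}}$ simply doubles (i.e.\ $2\widecheck{n}<2^{\widehat{n}+1}$), which is automatic from $\widecheck{n}<2^{\widehat{n}}$, and that the $l=0$ case, where $n'=n$ and only property (i) is invoked, is not lost. I do not anticipate any genuinely hard step; the result is essentially the combined $\chi$-reformulation of Lemma \ref{propagate} and of $\gamma_{n+j}\leq 2^j\gamma_n$, assembled along the two directions in which the wedge $W(n)$ extends.
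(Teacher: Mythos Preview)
Your proposal is correct and matches the paper's (essentially implicit) argument: the paper does not spell out a proof but simply declares the theorem to be a restatement of the previously listed bounds on $\gamma_n$ together with Lemma~\ref{propagate} and property~(i), and you have accurately supplied exactly those translations. The only thing to note is that the paper somewhat casually attributes part~(3) to Lemma~\ref{propagate} alone, whereas you correctly identify that one also needs the trivial bound $\gamma_{n+j}\leq 2^j\gamma_n$ to move horizontally within a fixed $\widehat{\phantom{n}}$-level of the wedge; this is not a deviation but a clarification.
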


The best known (upper and lower) bounds\footnote{At least, as of November 2011. But the authors are unaware of any improvements.} on $\gamma_n$ for $n \leq 33$ (and $r \leq 10$) have been collected in \cite{Keri}. The same values were listed in \cite{MR4658424}. In Figure \ref{chi1},  we have converted these values to the best known lower bounds on $\chi_n$, which permits easier comparison. We remark that \cite{Keri} includes a variety of citations for upper bounds on $\gamma_n$ in the range $19 \leq n \leq  29$. However, they all follow immediately from Theorem \ref{wedge} and bounds computed for smaller $n$. Another surprising feature of the problem is that (other than Lemma \ref{propagate}) there is little in the way of a general theory. Successful attacks on the problem have largely been limited to a single dimension. For $n=9,10$ and $12$ the best results were achieved by simulated annealing, a flow-based numerical technique borrowed from engineering. Bucking the trend, the best known upper bound for $n=11$ is based on an elegant construction. However, it relies on the existence of a particular Steiner system. Since these are rare, the methodology doesn't apply to other dimensions. In $n=13$, a  group theoretic techniques were used to reduce the computational complexity to the point where a stochastic optimization method known as Tabu search could be applied.

 \begin{figure}\label{chi1}
\caption{Best known upper bounds on $\chi$. Here  $^*$ denotes the bound is known to be sharp. The note field either indicates a citation or gives the vertex of the domination wedge the bound results from. A question mark means that $\chi>0$  is expected, but unverified.}
\bgroup
\small
\setlength{\tabcolsep}{0.4em}
\begin{center}
\def\arraystretch{1.3}
\begin{tabular}{c|ccccccccccccccccc}
$n$ &  $2^*$ & $3^*$ & $4^*$ & $5^*$ & $6^*$ & $7^*$ & $8^*$ & $9^*$ & 10 & 11 & 12 & 13 &14 & $15^*$ &$16^*$ & 17 \\
\hline
$\chi_n$ &$0$  &$0$ &$0$  &  $\frac{1}{8}$ &  $ \frac{1}{4}$ &$0$ &$0$  & $\frac{1}{32}$ & $\frac{1}{16}$ & $ \frac{1}{4}$ & $\frac{33}{128}$ & $\frac{5}{16}$ & $\frac{5}{16}$ & 0 & 0 & 0 \\
\text{note} & & & & &  & & & \scriptsize \cite{Wille96} &  \scriptsize \cite{MR1041836} &  \scriptsize\cite{MR859092}& \scriptsize \cite{Wille96} & \scriptsize \cite{MR1669523} & 13 & & & ?\\
\multicolumn{2}{c}{}\\
$n$  &  18& 19& 20 & 21 & 22 & 23 & 24 & 25 & 26 & 27  & 28 & 29 & 30 &$31^*$ & $32^*$ & 33\\
\hline
$\chi_n$& 0 &  $\frac{1}{32}$ & $\frac{1}{32}$  & $ \frac{1}{16}$ &$ \frac{1}{16}$  &$\frac{1}{4} $ &$\frac{1}{4} $  & $\frac{33}{128}$& $ \frac{33}{128}$ & $\frac{5}{16}$ & $\frac{5}{16}$  & $\frac{5}{16}$  & $\frac{5}{16}$ & 0 & 0  &0\\
\text{note}  & ?   & 9 & 9 &  10 & 10 & 11 & 11 & 12 & 12 & 13  &  $13$ &  $13$ &  $13$ & & & ?\\
\end{tabular}
\end{center}
\egroup
\end{figure}

It is surprising that little progress has been made since 1999! It may be that the majority of these bounds are in fact sharp. However, it would be somewhat shocking if turned out that $\chi_{14} = \chi_{13}$.  Additionally, every genuinely new bound in this table occurs with ${\widehat{n}} \leq 3$.  It seems likely that further improvements are achievable at the ${\widehat{n}} =4$ level, i.e. for $ 17 \leq n \leq 30$.

There are a handful of known improvements in specific high dimensions. In \cite{MR1483747}, a method based in linear algebra was used to show that $\gamma_{\gamma_n -1} \leq \gamma_n \cdot 2^{\gamma_n - n-1}$.  This was applied to $n=9$ to find the improved bound $\gamma_{61} \leq 5 \cdot 2^{53}$, or equivalently $\chi_{61} \geq 3/8$ (which also implies $\chi_{62} \geq 3/8)$ .  However this approach  quickly falters both because we don't know the precise value of $\gamma_n$ for $n >9$ outside of the cases $\widecheck{n}=0,1$ (for which it yields no new information) and because $\gamma_n$ grows rapidly in $n$. However, in that same paper, more refined techniques were used to show that $\chi_{46} \geq \frac{1}{8}$ and, combined with work from \cite{MR290860}, that $\chi_{56}  \geq \frac{711}{2048}$.

The ad hoc nature of these ingenious solutions means it is hard to make general predictions. All values of $n$ for which $\chi_n$ was previously known to be positive lay inside $W(9)$. In the next two sections, we shall present a new method that provides new and improved upper bounds on an infinite set of $n$. While these new bounds are almost certainly not sharp, they apply in many cases untouched by previous theory. In particular, we obtain results for many $n \notin W(9)$ and, indeed, the range of indices for which we find bound improvements is not contained in $W(q)$ for any $q>1$.

We end this introductory section with the following two-part conjecture:

\begin{conj}\label{conj}\hfill
\begin{enumerate}[(1)]
\item $\chi_n$ is strictly increasing on each interval $2^m \leq n \leq 2^{m+1}-2$
\item $\limsup\limits_{n \to \infty} \chi_n  =  \frac{1}{2}$.
\end{enumerate}
\end{conj}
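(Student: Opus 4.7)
\medskip

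\noindent\textbf{A plan.} The two parts are largely independent and I would attack them separately.

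For part (1), the key reduction is that $\widehat{n}=m$ is constant on the interval $2^m\le n\le 2^{m+1}-2$, so $\chi_{n+1}>\chi_n$ is equivalent to the strict inequality $\gamma_{n+1}<2\gamma_n$. The trivial doubling already gives $\gamma_{n+1}\le 2\gamma_n$, so the task is to exhibit a consistent strict saving. My plan would be to exploit the overcovering forced by Theorem \ref{wedge}(2): when $\widecheck{n}\ge 2$, any minimal $\Delta\subset\mathcal{Q}_n$ satisfies $(n+1)|\Delta|>2^n$, so some point of $\mathcal{Q}_n$ is at distance at most $1$ from two distinct dominators. One would try to convert this slack in $\Delta$ into a local surgery on the doubled set $\Delta\times\{0,1\}\subset\mathcal{Q}_{n+1}$ that removes at least one vertex without uncovering any other. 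A cleaner alternative would be an LP-duality argument identifying a fractional saving in $\gamma_{n+1}$ directly from the slackness pattern at optimum of $\gamma_n$. The main obstacle here is the endpoints: at $n=2^{m+1}-1$ the equality $\gamma_{n+1}=2\gamma_n$ is forced by the perfect Hamming code, so any scheme producing a saving must vanish gracefully at that boundary rather than being a uniform positive quantity, which rules out many of the most natural uniform arguments.

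For part (2), Theorem \ref{wedge}(2) already supplies $\chi_n<1/2$ for all $n$, so the content is the existence of a sequence along which $\chi_n\to 1/2$. The same information-theoretic bound shows that the only candidates are $n$ with $\widecheck{n}$ close to $2^{\widehat{n}}-1$: along $n=2^{m+1}-3$, for example, one needs dominating sets of size $(1+o(1))\cdot 2^n/(n+1)$, i.e., coverings asymptotically matching the sphere-packing bound. My plan would be to iterate the construction of this paper (which according to the abstract reaches $\widecheck{n}<2^{\widehat{n}}$ via an expanding wedge of radius $\sim 2^{\widehat{n}/2}$), layering it on top of itself and combining it with Lemma \ref{propagate}-style propagations tailored to drive $\widecheck{n}$ as close to $2^{\widehat{n}}-1$ as possible. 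The main obstacle here is formidable and essentially extrinsic to the paper: constructing near-perfect covering codes outside the narrow perfect-code family has resisted every attempt for decades, and it is entirely possible that this half of the conjecture requires a genuinely new construction, or is simply false; indeed, even getting $\chi_n$ bounded below by a fixed positive constant along a sequence of $n$ with arbitrarily large $\widehat{n}$ is already a significant open problem that the paper's own main result only partially addresses.
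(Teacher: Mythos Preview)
The statement is Conjecture~\ref{conj}; the paper does not prove it, and says immediately afterward that ``we are nowhere near a proof of either part.'' There is therefore nothing in the paper to compare your attempt against.

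What you have written is, as you yourself label it, a plan rather than a proof, and on the whole you identify the obstacles correctly. Two remarks are worth making. For part~(1), already the first nontrivial step on each interval---showing $\chi_{2^m+1}>0$, equivalently $\gamma_{2^m+1}<2\gamma_{2^m}$---is open for every $m\ge 4$ (the paper flags $\chi_{17}$, $\chi_{18}$, $\chi_{33}$ with question marks), so any surgery-on-the-doubled-set argument would have to resolve infinitely many of these open cases at once; your sketch does not indicate how. For part~(2), iterating the paper's construction cannot help: the paper states in its final section that the lower bounds on $\chi_n$ it produces tend rapidly to~$0$ as $\widehat{n}\to\infty$, so no amount of layering will drive $\chi_n$ toward $1/2$. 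Your closing sentence is also slightly misstated: obtaining a \emph{fixed} positive lower bound on $\chi_n$ along a sequence with $\widehat{n}\to\infty$ is not open---Theorem~\ref{wedge}(3) already gives $\chi_m\ge\chi_5=1/8$ for every $m\in W(5)$, and $W(5)$ contains $m$ with arbitrarily large $\widehat{m}$. What is genuinely open, and what part~(2) asks for, is pushing such a constant up to $1/2$.
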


To the best of the authors' knowledge, we are nowhere near a proof of either part of Conjecture \ref{conj}. For Part (1) a quick glance at Figure \ref{best} indicates how much work must be done to achieve a positive proof. For instance, the best known bounds give $\chi_{13}, \chi_{14} \geq 5/16$ with neither bound known to be sharp.  For Part (2), insufficiently many cases have been computed to predict patterns with any confidence. However the conjecture still seems likely. It would be surprising and very interesting if either part turned out to be false. An answer in the negative to the second part would imply the existence of some obstruction to $\mathcal{Q}_{2^n-2}$ approaching the structural simplicity of $\mathcal{Q}_{2^n-1}$ at very large $n$.


As (fairly weak) evidence in support of Part (2), we note that the two well-known lower bounds can be rewritten as $\chi_n \leq 1- \frac{2^{\widehat{n}}}{n+1}  $ for $n$ odd, and $\chi_n \leq 1- \frac{2^{\widehat{n}}}{n}$ for $n$ even.  By definition $  2^{\widehat{n}} -1\leq n < 2^{{\widehat{n}}+1}-1$  with $n$ increasing from one extreme to the other, then cycling back whenever ${\widehat{n}}$ jumps by $1$. Thus, in both cases, the limit supremum of the bounds also equals $1/2$.

There are two general improvements relevant here: 
\begin{itemize}
\item $\chi_n \leq 1-  \frac{(n-2)}{n-2-2/n} \cdot \frac{2^{\widehat{n}}}{n}$ for $n= 0 \text{ mod } 6$, see  \cite{MR4658424};

\item $\chi_n \leq 1- \left(1 +  \frac{10}{ 5 \binom{n}{2}  -n +2}  \right)  \cdot \frac{2^{\widehat{n}}}{n+1}$ for $n =5 \text{ mod } 6$, $n\geq 11$, see \cite{MR1477282}.
\end{itemize}
However, both these improvements clearly satisfy the same property.

\section{Main Technical Result}

The big takeaway from the last section is that an improved bound for any individual value of $n$ results in improved bounds on the entire domination wedge $W(n)$. This implies that it is beneficial to find bounds for $n$ where ${\widecheck{n}}$ is small. The bounds arising from Figure \ref{chi1}  do provide information for a wide swath of cases, but  the ranges of $n$ to which they apply have the asymptotic property that $ {\widecheck{n}}= \Omega(2^{\widehat{n}})$. 

We shall now work toward providing novel upper bounds that apply in sufficient generality to capture (comparatively) small values of ${\widecheck{n}}$. Throughout this section, we shall use the binary word formulation for hypercubes, i.e. $\mathcal{Q}_n$ will be identified with the set of all binary words of length $n$. In what follows, we'll use the notation  $\delta^p_q$ for the binary word of length $p$ for which all letters are zero except for a single one in the $q$-th position (from the left).  We shall denote concatenation of words either by juxtaposition or by the symbol $\cdot$. The reader is cautioned to recall that $\cdot$ then obeys different algebraic rules than is usually associated with a product. For instance, it obeys the distribution rule 
\[   (a\cdot b) + (c\cdot d)  =  (a+c)\cdot (b+d)=(a \cdot d) + (c \cdot b) .\]

To present our technical result in a workable form, we need to make several definitions. First, for any positive integer $q$ we define $\Theta(q) $ to be the unique pair $(\theta,\xi)$ of non-negative integers such that
\begin{equation}\label{Theta}  \binom{\theta+1}{2} = q +\xi  \quad \text{and} \quad  0 \leq \xi<\theta+1,\end{equation}
i.e., $\theta$ is the smallest integer such that   $1+ 2+\cdots + \theta \geq q$ and   $\xi$ is the excess. For later convenience, we also define $\Theta^{(2)} (q):= (\theta,\xi,\Theta(\xi))$ where $(\theta,\xi) =\Theta(q)$.


Now fix $l>1$ and set $(\theta,\xi)  = \Theta(l+1)$.  For $m \geq 0$, we define an $m$-decomposition of $l$ to be a set $S= \{s_1,\dots,s_q\}$ such that  
\begin{equation}\label{S} 1 \leq s_1 <s_2 < \cdots < s_q < \theta \quad \text{ and } \quad \theta+ m + \sum\limits_s S = l+1.\end{equation}
For simplicity of notation, we set $\Sigma := \sum_s S$. We shall apply such decompositions primarily in the context of binary words. Here an  $m$-decomposition  $S$ of $l$ results in a splitting:
 \begin{equation}\label{decom}
 \underbrace{\star \cdots \star}_l = \underbrace{\underbrace{\star \cdots \star}_{s_1} \underbrace{\star \cdots \star}_{s_2} \underbrace{\star \cdots \star}_{s_q} \underbrace{\star \cdots \star}_{\theta-1}}_{l-m} \underbrace{\star \cdots \star}_{m}.
\end{equation}
For every $s \in S$, we can define projection maps $\pi^{(s)} \colon \mathcal{Q}_l \to \mathcal{Q}_s$ by simply extracting the subword of length $s$ in the decomposition above. The only possibilities for ambiguity occur if $\theta-1$ or $m-1$ lie within  $S$, in which case we specify that we choose the leftmost subword of the appropriate size. For a binary word $z \in \mathcal{Q}_l$ we set $z^{(s)} = \pi^{(s)}(z)$,  and $z^\prime$, $z^{\prime \prime}$  to be the projections onto the penultimate and final subwords, respectively. In a mild abuse of notation, we shall sometimes use the same notation $w^{(s)} , w^\prime$ for projections from words of the length $l-m$ onto the appropriate subwords of length $s$.

\begin{defn}
A subset of a hypercube is \textit{$k$-separated} if each pair of distinct vertices within the subset are a distance of at least $k$ apart. If $\Delta$ is a minimal dominating set for $\mathcal{Q}_l$ and $S$ is an $m$-decomposition of $l$, then a subset $E\subseteq \Delta$ is $(S,m)$-admissible if $E^{(s)} \coloneq \pi^{(s)}(E) \subseteq \mathcal{Q}_s$ is $3$-separated for each $s \in S$.
\end{defn}

With all this in place, we can present our main result,  with the caveat that converting this into useful bounds will take considerable further work.


\begin{thm}\label{Smain}
Fix $l>1$ and choose $\Delta$ to be a minimal dominating set for $\mathcal{Q}_l$. Set $(\theta,\xi)= \Theta(l+1)$. Suppose $S$ is an $m$-decomposition of $l$  for some $m \geq 0$ with the property that $m < \theta - |S|$.

Then for any $(S,m)$-admissible subset $E \subseteq \Delta$,
\[\gamma_{m+\theta+l} \leq 2^{m+\theta} \gamma_l -  2^m |E| .\] 
If $\widecheck{l}+m + \theta < 2^{\widehat{l}}$, this translates to 
\[ \chi_{m+\theta+l}  \geq \chi_{l} +2^{\widehat{l} -l - \theta} |E|  .\]
\end{thm}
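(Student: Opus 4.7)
The plan is to adapt the construction of Lemma \ref{propagate}. Starting from the dominating set $\Delta$ for $\mathcal{Q}_l$, I would build an explicit candidate $\widetilde{\Delta} \subseteq \mathcal{Q}_{m+\theta+l}$ parametrized by pairs $(\sigma, x) \in \Delta \times \mathcal{Q}_{m+\theta}$. The extra $m+\theta$ coordinates would be structured according to the decomposition (\ref{decom}): for each $s \in S$, one coordinate plays the role of a parity bit related to $\sigma^{(s)}$ (analogous to the $\pi(x)$ coordinate in Lemma \ref{propagate}), with similar structure for the penultimate subword $\sigma'$, and the remaining $m + \theta - |S| - 1 \geq m$ coordinates varying freely in $x$. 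With this organization, the parametrization is injective, so the base set has size $2^{m+\theta}\gamma_l$.

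Next, for each $\sigma \in E$ I would designate $2^m$ elements of this base set for deletion — specifically those whose $x$-parameters range over all of the final $m$-slot with the other coordinates determined by $\sigma$. The $(S, m)$-admissibility is engineered precisely for this step: since $E^{(s)}$ is $3$-separated in $\mathcal{Q}_s$ for each $s \in S$, the radius-$1$ balls around distinct elements of $E^{(s)}$ are pairwise disjoint. Consequently, for any target vertex $(y, w)$ at most one $\sigma \in E$ can simultaneously satisfy $d(\sigma^{(s)}, y^{(s)}) \leq 1$ for every $s \in S$. This uniqueness is what licenses the removals.

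The main obstacle — and where the real content of the proof lies — is verifying that this modified $\widetilde{\Delta}$ still dominates. Given $(y, w) \in \mathcal{Q}_l \times \mathcal{Q}_{m+\theta}$, use that $\Delta$ dominates $\mathcal{Q}_l$ to pick $\sigma \in \Delta$ with $d(y,\sigma) \leq 1$. The argument then splits: if $\sigma \notin E$, the natural extension at $(\sigma, x)$ for a suitably chosen $x$ is still present in $\widetilde{\Delta}$ and within distance $1$ of $(y, w)$. If $\sigma \in E$ and the obvious extension has been deleted, one must find a surviving nearby element by either (i) adjusting $x$ to a one-bit-adjacent $x'$ whose extension survives — the slack condition $m < \theta - |S|$ guarantees a coordinate outside the deleted $m$-slot is available to flip — or (ii) replacing $\sigma$ by an adjacent $\widetilde{\sigma} \in \Delta$, which the uniqueness from admissibility keeps well-defined. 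Executing this case analysis cleanly while tracking every parity constraint across the multiple subwords is where the delicacy lies.

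Finally, the $\chi$-inequality is routine algebra. Under the assumption $\widecheck{l}+m+\theta < 2^{\widehat{l}}$, the leading exponent does not advance, so $\widehat{m+\theta+l} = \widehat{l}$. Substituting $\gamma_n = (1-\chi_n)2^{n-\widehat{n}}$ into the $\gamma$-bound and dividing by $2^{m+\theta+l-\widehat{l}}$ immediately yields $\chi_{m+\theta+l} \geq \chi_l + 2^{\widehat{l}-l-\theta}|E|$.
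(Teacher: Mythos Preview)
Your high-level architecture --- build a bijectively parametrized set of size $2^{m+\theta}\gamma_l$, excise $2^m$ elements per $\sigma\in E$, then verify domination survives --- matches the paper, and your final paragraph on the $\chi$-inequality is correct. But the middle step, as you have written it, has a real gap. Suppose the deleted element $u=F(\sigma,x_0)$ was the unique dominator of a target $v$ with $d(u,v)=1$, the discrepancy lying in an $l$-coordinate. Your option (i), flipping one bit of $x$ outside the $m$-slot, yields an element at distance $1$ from $u$, hence at distance $0$ or $2$ from $v$; distance $2$ does not dominate. Your option (ii), passing to an adjacent $\widetilde{\sigma}\in\Delta$, presumes such an element exists, which a minimal dominating set need not provide; the $3$-separation of $E^{(s)}$ gives uniqueness, not existence. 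So neither escape route works in general, and the unspecified ``parity bit per $s$'' map $F$ is not enough structure to repair this.

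What the paper actually builds is considerably more intricate than an iterated Lemma~\ref{propagate}. The bijection $g\colon \mathcal{Q}_m\times\mathcal{Q}_\theta\times\mathcal{Q}_l\to\mathcal{Q}_{m+\theta+l}$ is assembled from a \emph{swap map} $\rho$ --- which, keyed by the position of the rightmost $1$ in the $\theta$-block, exchanges the $\mathcal{Q}_s$-prefix of that block with the subword $z^{(s)}$ inside the $l$-block --- composed with a nonlinear offset $t$ whose very definition uses the $3$-separation (for $y$ within distance $1$ of some $\sigma^{(s)}$, $t$ shifts by $e_s(\sigma^{(s)})$ plus possibly one extra bit; admissibility makes this $\sigma$ unique). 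The payoff is that for $u=g(\alpha,a(\alpha),\sigma)\in V$, perturbing the $\theta$-argument by a single basis vector produces elements of $D\setminus V$ lying at Hamming distance exactly $1$ or $2$ from $u$, and an explicit enumeration shows these cover \emph{every} coordinate direction around $u$. Redundancy of $V$ is thus proved by exhibiting, for each neighbour of $u$, a surviving dominator built from the \emph{same} $\sigma$ --- not by switching to another element of $\Delta$.
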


Before we begin the proof, we remark that, at first glance, this estimate does not appear to improve upon the natural bound propagation for $m>0$. However, the fact that $S$ is an $m$-decomposition and $E$ is $(S,m)$-admissible results (for some ranges of $m$) in growth in $|E|$ as $m$ increases. 

\begin{proof}

To exploit \eqref{decom}, we consider maps $e_s \colon \mathcal{Q}_s \to \mathcal{Q}_{l-m}$ for $s \in S$ defined by 
\[    \pi^\prime \circ  e_s =  0\cdots 0,   \quad    \pi^{\tilde{s}} \circ e_s    =  \begin{cases}  \text{id} , \quad s =\tilde{s}, \\ 0 \cdots 0, \quad s \ne \tilde{s}. \end{cases}\]  We also introduce isometric embeddings $\iota^j_i\colon \mathcal{Q}_i \to \mathcal{Q}_j$ for $1 \leq i <j$ by  $\iota^j_i (y) = y1 0\cdots 0$. Additionally, we employ the convention that $\mathcal{Q}_{-1} = \mathcal{Q}_{0} =\{ \circ \}$ with $\circ$ denoting the empty word, and set  $\iota^j_{-1}(\circ ) = 0\cdots 0$ and $\iota^j_0 (\circ ) = 10\cdots 0$. 

 With this in hand, we then decompose $\mathcal{Q}_\theta$ as
\begin{equation}\label{iota} \mathcal{Q}_\theta \cong  \bigcup\limits_{k =-1}^{\theta-1} \iota^\theta_k (\mathcal{Q}_k),\end{equation}
This decomposition allows us to define several useful maps. First, we construct $\rho  \colon \mathcal{Q}_{\theta} \times \mathcal{Q}_{l-m} \to \mathcal{Q}_{\theta+l-m}$  by the rule
\[ \rho (\iota^\theta_k y , z ) =  \begin{cases}  \iota^\theta_k y \cdot z, &\text{ if $k \notin S$,}\\
 \iota^\theta_sz^{(s)}  \cdot    (z  + e_sz^{(s)}  + e_s y), & \text{ if $k=s \in S$}\end{cases}\]
 In other words, $\rho$ identifies which piece of the  decomposition \eqref{iota} its input lies in and then swaps entries to the left of the final $1$ with the appropriate subword $z^{(s)}$ in the concatenation $xz$. 
 



Our next ingredient is yet another map, this time $t \colon \mathcal{Q}_\theta \to \mathcal{Q}_{l-m}$, defined as follows:
\begin{itemize}
\item For $k \notin S$, 
\[ t (\iota^\theta_k y ) := \begin{cases}  \underbrace{0 \cdots 0}_{\Sigma} \cdot \iota_k^\theta y, &\text{ if $k\geq 1$} \\ 0 \cdots 0, \quad  &\text{ $k=-1,0$.}  \end{cases}\]
\item For $s \in S$ and $y \in \mathcal{Q}_s$, there are two cases. If there exists $\sigma \in E$ such that $ \| y -\sigma^{(s)} \| \leq 1$, then
\[ t( \iota_s y) := e_s (\sigma^{(s)} ) +  \begin{cases}    \delta^{l-m}_{\Sigma + s+1}, &\text{ if $y=\sigma$, } \\ 0 \cdots 0, \quad  &\text{ otherwise.}  \end{cases}\]
If no such $\sigma$ exists, then $t(\iota_sy) = 0 \cdots 0$.
\end{itemize}
For the second part, note that the $3$-separation property for $E$ ensures that $\sigma$ is unique if it exists.
Next, we combine $\rho$ and $t$ into a map $f \colon \mathcal{Q}_\theta \times \mathcal{Q}_{l-m} \to \mathcal{Q}_{\theta+l-m}$ by 
\[   f (x , z)  = \rho \Big(  x  \cdot  \big(z-t(x) \big) \Big).\] 
This function $f$ has the useful, easily checked, property that
\begin{equation}\label{f}
\|f( x ,z) - f(x, w) \|= \|z-w\|.
\end{equation}

Unfortunately, to account for the parameter $m$,  we need yet another construction. Set $S^c = \{1,\dots,\theta-1\} \setminus S$. The assumption $m < \theta- |S|$ implies there exists an injective map $ a\colon \{1,\dots,m\} \to S^c$. This induces a linear map $ a\colon \mathcal{Q}_ m \to \mathcal{Q}_\theta$ as follows: set 
\[ a(\delta^m_i) := \iota_{a(i)}(0\cdots 0) = \delta^\theta_{a(i)+1}, \quad  1 \leq i \leq m, \]
 then extend $a$ by $\mathbb{F}_2$-linearity.   Note that the definition of $a$ implies that if $a(\alpha) =\iota_k y$ then $k \notin S$.  In particular $a (\delta^m_i) = \iota_{a(i)} (0\cdots 0)$ and $t (a (\delta^m_i)) = \delta^{l-m}_{\Sigma+ a(i)+1}$. Furthermore, while $t$ is not linear in general, it is linear when restricted to $\text{Im}(a)$. 

At long last we are finally ready to construct some dominating sets. Just kidding! We first introduce one more map $g \colon \mathcal{Q}_m \times \mathcal{Q}_\theta \times \mathcal{Q}_l \to \mathcal{Q}_{m+\theta+l}$ by
\[ g(\alpha, x, z) =  \alpha \cdot \left[ a(\alpha) \cdot t(a(\alpha))  + f ( x - a(\alpha), \hat{z}  ) \right]  \cdot (z^{\prime \prime} +\alpha) .\] 
With this, we can finally define 
\[ D := g( \mathcal{Q}_m, \mathcal{Q}_\theta, \Delta), \quad  V = \{  g(\alpha, a(\alpha), \sigma) \big|  \alpha \in \mathcal{Q}_m,  \sigma \in E\} \subset D.\]
and make two claims:
\begin{enumerate}[(A)]
\item $D$ is a dominating set for $\mathcal{Q}_{m+\theta+l}$.
\item $\mathcal{D}( V) \subseteq \mathcal{D}(D \setminus V)$.
\end{enumerate}
We note that $|D| = 2^{m+\theta} \gamma_l$ and $|V| = 2^m |E|$. So if these claims are true, the result follows immediately.

Part (A) is essentially an exercise in unwinding  definitions. First, it is straight forward to check that $g$ is injective, and hence bijective. Next, for fixed $\alpha \in \mathcal{Q}_m$ and $x \in \mathcal{Q}_\theta$, using \eqref{f} we have
\begin{align*} \| &g(\alpha,x,z) - g(\alpha,x,w)\|  =  \| f(x -a(\alpha), 
\hat{z}) -  f(x -a(\alpha), \hat{w})\|  + \|z^{\prime \prime} + w^{\prime \prime}\| \\
&= \|  \hat{z} - \hat{w}  \| + \|z^{\prime \prime} - w^{\prime \prime}\| = \|z - w\|,
\end{align*}
where $\hat{z}$ is the projection of $z \in Q_{l}$ onto $Q_{l-m}$ given by lopping off the final subword $z^{\prime \prime}$. Thus $g(\alpha,x, \Delta)$ dominates $g(\alpha,x,\mathcal{Q}_l)$. Claim (A) then follows from bijectivity of $g$.


Establishing (B) requires a series of computations. Fix $\alpha \in \mathcal{Q}_m$ and $\sigma \in E$, then set 
\[ u = g(\alpha, a(\alpha), \sigma)= \alpha \cdot \left[ a(\alpha) \cdot t(a(\alpha))  + 0\cdots 0 \cdot \hat{\sigma}  ) \right]  \cdot (\sigma^{\prime \prime} +\alpha).\]
First we note  $0\notin S$, so 
\begin{align*}
D \setminus V & \ni  g(\alpha, a(\alpha) +\delta^\theta_1 , \sigma)\\
 &=  \alpha \cdot \left[ a(\alpha) \cdot t(a(\alpha))  + f (\iota_0(\circ) , \hat{\sigma}  ) \right]    \cdot (\sigma^{\prime \prime} +\alpha) = u + \delta^{m+\theta+l}_{m+1}.
 \end{align*}
 Similarly, for $k \notin S$, $k>0$,
 \begin{align*}
D \setminus V & \ni  g(\alpha, a(\alpha) +\delta^\theta_{k+1} , \sigma)\\
 &=  \alpha \cdot \left[ a(\alpha) \cdot t(a(\alpha))  + f (\iota_{k}(0\cdots 0) , \hat{\sigma}  ) \right]    \cdot (\sigma^{\prime \prime} +\alpha)  \\ &= u +(0\cdots 0)  \cdot \iota_{k}(0\cdots 0) \cdot   t(\iota_{k}(0\cdots 0))  \cdot (0\cdots 0) \\ &= u + \delta^{m+\theta+l}_{m+k+1} + \delta^{m+\theta+l}_{m+\Sigma+k+1} 
 \end{align*}
Now for $s \in S$,
\begin{align*}
D &\setminus V\ni  g(\alpha , a(\alpha) + \iota^\theta_s  \sigma^{(s)}  ,\sigma)  =  \alpha \cdot \left[ a(\alpha) \cdot t(a(\alpha))  + f (\iota^\theta_s \sigma^{(s)} , \hat{\sigma}  ) \right]    \cdot (\sigma^{\prime \prime} +\alpha) \\
&=  u +  (0\cdots 0) \cdot  \big[ \iota^\theta_s (0\cdots 0) \cdot (\hat{\sigma} + \delta^{l-m}_{\Sigma+s+1} )\big] \cdot (0\cdots 0)  \\
&= u + \delta^{m+\theta+l}_{m+s+1} + \delta^{m+\theta+l}_{m+\theta+\Sigma+s+1} 
\end{align*}
and for $1 \leq j \leq s \in S $
\begin{align*}
D &\setminus V\ni  g(\alpha , a(\alpha) + \iota^\theta_s  \sigma^{(s)} + \delta^s_j ,\sigma)  =  \alpha \cdot \left[ a(\alpha) \cdot t(a(\alpha))  + f (\iota^\theta_s \sigma^{(s)} + \delta^s_j  , \hat{\sigma}  ) \right]    \cdot (\sigma^{\prime \prime} +\alpha) \\
&=  u +  (0\cdots 0) \cdot  \big[ \iota^\theta_s (0\cdots 0) \cdot (\hat{\sigma} + e_s \delta^s_j )\big] \cdot (0\cdots 0) \\
& = u + \delta^{m+\theta+l}_{m+s+1} +  \underbrace{0 \cdots 0}_{m + \theta} \cdot (e_s \delta^s_j) \cdot (0\cdots 0) \
 \end{align*}
Furthermore, for $1 \leq i \leq m$, 
\begin{align*}
D &\setminus V  \ni  g( \alpha+ \delta^m_i , a(\alpha), \sigma) \\&=   (\alpha+ \delta^m_i ) \cdot  \left[ a(\alpha+ \delta^m_i ) \cdot t(a(\alpha+ \delta^m_i ))  + f ( \iota^\theta_{a(i)}(0\cdots 0), \hat{z}  ) \right]   \cdot (z^{\prime \prime} +\alpha+ \delta^m_i )\\
&= (\alpha+ \delta^m_i )\cdot \Big[ a(\alpha) \cdot t(a(\alpha))  + \delta^\theta_{a(i)+1} \cdot  \delta^{l-m}_{\Sigma+a(i)+1}  \\
& \quad  + \delta^\theta_{a(i)+1} \cdot ( \hat{z}  +\delta^{l-m}_{\Sigma+a(i)+1} ) \Big]   \cdot (z^{\prime \prime} +\alpha+ \delta^m_i ) = u + \delta^{m+\theta+l}_i + \delta^{m+\theta+l}_{\theta+l+i}
\end{align*}

A careful analysis, considering the full range of every free parameter, shows that every element of $\mathcal{D}(u)$ is dominated by some element of $D\setminus V$. Thus $B$ holds.

\end{proof}

\section{New Upper Bounds on $\chi$}

We shall apply Theorem \ref{Smain} in the case $l = 2^{\widehat{n}} -1$ and $(\theta,\xi,\psi,\xi^\prime)  = \Theta^{(2)}(2^{\widehat{n}})$. For each $m \geq 0$, the goal is then to maximize $|E|$ over all minimal dominating sets $\Delta$ for $\mathcal{Q}_{2^{\widehat{n}} -1}$,  $m-$decompositions $S$ of $2^{\widehat{n}} -1$ and $S$-admissible subsets $E$.  The reader would be forgiven for wondering if we have translated a simply stated, intractable optimization problem for a more complicated one. However, we can extract improved bounds for a wide range of $\gamma_n$.

However, Corollary \ref{graph} comes to the rescue and makes the problem accessible. For $l = 2^{\widehat{n}} -1$ we can choose our minimal dominating set in the form $\Delta = \{ (x,h(x) ) \big| x \in \mathcal{Q}_{2^{\widehat{n}} -1 -\widehat{n}} \}$ where $h$ is a $\mathbb{F}_2$-linear map $ \mathcal{Q}_{2^{\widehat{n}} -1 -\widehat{n}} \to \mathcal{Q}_{\widehat{n}}$. In this case, we have free choice for the first $2^{\widehat{n}} -1 -\widehat{n}$ entries for $x \in \Delta$, however the final $\widehat{n}$ positions are then uniquely determined. For $\widehat{n} >2$, $\theta- 1> \widehat{n} $, thus the letters that determine whether our $S$-admissible sets are $3$-separable can be freely chosen.

If we choose an $m$-decomposition $S \subset \{1,\dots,\theta-1\}$ of $2^{\widehat{n}}-1$ such that $\theta-|S|>m$, then $E$ comes from the split
\[ \underbrace{\star \cdots \star}_\Sigma \underbrace{\star \cdots \star}_{\theta-1} \underbrace{\star \cdots \star}_{m}.\]
For $\widehat{n}>2$,  $\theta-1 > \widehat{n}$. The maximal size of $E$ can thus be determined as
\[   |E| =  2^{\theta-1+m - \widehat{n}}   \prod\limits_{ s \in S} \lambda(s),\]
where $\lambda(s)$ is the maximal size of $3$-separated set in $\mathcal{Q}_s$. This reduces the problem to maximizing  $\prod_{ s \in S} \lambda(s)$ over sets $\{ 1 \leq s_1 < \cdots < s_q \leq \theta-1\}$ such that $\sum s_i = 2^{\widehat{n}} - \theta - m$.

Note, there is a subtle interaction with $m$ here. Increasing $m$ by $1$ doubles the multiplicative factor outside the product but reduces either the number of terms or a value of $s$ inside the product. In many cases, but certainly not all, these effects will cancel out. However, in others, the reduction in $S$ does not change the value of the product. The upshot of this is the following:

\begin{thm}\label{chimain}
For $n = 2^{\widehat{n}}-1 +\theta$,  where $(\theta,\xi) =\Theta(2^{\widehat{n}})$,  $m \geq 0$, and any $m$-decomposition $S$ of $2^{\widehat{n}}-1$,
\begin{equation}\label{chibound}
\chi_{n+m}  \geq 2^{m - 2^{\widehat{n}}}   \prod\limits_{ s \in S} \lambda(s)
\end{equation}
\end{thm}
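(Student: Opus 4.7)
The plan is to apply Theorem~\ref{Smain} with $l = 2^{\widehat{n}}-1$. Then $\widehat{l}=\widehat{n}$, $\widecheck{l}=0$, so $\chi_l = 0$ by Theorem~\ref{wedge}(1); the pair $(\theta,\xi)$ in the statement is exactly $\Theta(l+1)$, and $n+m = l+\theta+m$. Provided an $(S,m)$-admissible $E$ is available, Theorem~\ref{Smain} gives
\[
\chi_{n+m} \;\geq\; 2^{\widehat{n} - l - \theta}\,|E| \;=\; 2^{\widehat{n} - 2^{\widehat{n}} + 1 - \theta}\,|E|,
\]
so the whole problem reduces to constructing an admissible $E \subseteq \Delta$ of cardinality $|E| = 2^{\theta+m-1-\widehat{n}}\prod_{s\in S}\lambda(s)$; the two exponents then telescope to the desired $m - 2^{\widehat{n}}$.

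To produce such an $E$, I will invoke Corollary~\ref{graph} and pick $\Delta = \{(x, h(x)) : x \in \mathcal{Q}_{l-\widehat{n}}\}$ for some $\mathbb{F}_2$-linear $h$. Viewing each element of $\Delta$ as a word of length $l$, the first $l-\widehat{n} = 2^{\widehat{n}}-1-\widehat{n}$ positions are free while the last $\widehat{n}$ are linearly determined. The $m$-decomposition splits the $l$ positions into blocks of sizes $s_1,\dots,s_q,\theta-1,m$, so $\Sigma = 2^{\widehat{n}} - \theta - m$. Since $\theta \geq \widehat{n}+1$ for $\widehat{n}>2$, one checks $\Sigma \leq l - \widehat{n}$, meaning the $S$-blocks sit wholly in the free range of $x$. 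Now fix, for each $s\in S$, a maximum $3$-separated subset $C_s\subseteq\mathcal{Q}_s$ with $|C_s|=\lambda(s)$, and set
\[
E \;:=\; \bigl\{(x,h(x)) \in \Delta : (x \cdot h(x))^{(s)}\in C_s \text{ for every } s \in S\bigr\}.
\]
Admissibility is automatic, since $\pi^{(s)}(E) \subseteq C_s$. Counting separately the $\prod_{s\in S}\lambda(s)$ choices for the $s$-blocks and the $2^{(l-\widehat{n})-\Sigma} = 2^{\theta+m-1-\widehat{n}}$ free choices for the remaining letters of $x$ gives the required cardinality, and substituting into Theorem~\ref{Smain} yields (\ref{chibound}).

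The main obstacle is bookkeeping the size inequalities that make the construction legitimate: the condition $\theta + m \geq \widehat{n}+1$, needed to fit $\Sigma$ inside the free range of $x$, and the hypothesis $\widecheck{l}+m+\theta < 2^{\widehat{l}}$ of Theorem~\ref{Smain} (which here becomes $m + \theta < 2^{\widehat{n}}$) needed to convert that theorem's $\gamma$-bound into a $\chi$-bound. Both follow from the growth $\theta \sim 2^{\widehat{n}/2}$ forced by $\binom{\theta+1}{2} \geq 2^{\widehat{n}}$, combined with the hypothesis $m < \theta - |S|$ inherited from Theorem~\ref{Smain} via the notion of $m$-decomposition. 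The edge cases $\widehat{n} \leq 2$, where $\theta - 1 < \widehat{n}$ can fail, are excluded by the implicit blanket assumption $\widehat{n} > 2$ that the authors use throughout the section preceding the theorem.
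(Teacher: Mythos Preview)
Your proposal is correct and follows essentially the same route as the paper: apply Theorem~\ref{Smain} at $l=2^{\widehat{n}}-1$, use the graph-structured minimal dominating set from Corollary~\ref{graph} so that the first $l-\widehat{n}$ coordinates are free, and build $E$ by prescribing maximum $3$-separated sets $C_s$ on each $s$-block while letting the remaining free coordinates vary; the cardinality $|E|=2^{\theta+m-1-\widehat{n}}\prod_{s\in S}\lambda(s)$ then plugs directly into Theorem~\ref{Smain} to give \eqref{chibound}. If anything, your write-up is slightly more explicit than the paper's about the bookkeeping inequalities $\theta+m\ge\widehat{n}+1$ and $m+\theta<2^{\widehat{n}}$ and about the implicit assumptions $\widehat{n}>2$ and $m<\theta-|S|$ needed to invoke Theorem~\ref{Smain}.
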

The goal is thus to optimize \eqref{chibound} over $m$-decompositions $S$. Unfortunately, $\lambda(k)$ is, in general, difficult to compute. However, it is straightforward to produce lower bounds, which can be translated into lower bounds for $\chi_n$.

\begin{lemma}\label{lambdabound}
For $n \geq 1$,
\[\lambda(n) \geq 2^{n -\widehat{n} - \one(\widecheck{n} >0)}.\]
If $\widecheck{n} =0$, then equality holds.
\end{lemma}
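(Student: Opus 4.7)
The plan is to use the classical Hamming-code construction for the lower bound, combined with a sphere-packing argument for the matching upper bound in the $\widecheck{n}=0$ case.

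Set $k := \widehat{n} + \one(\widecheck{n}>0)$. The first thing I would verify is the elementary inequality $n \leq 2^k - 1$: when $\widecheck{n}=0$ one has $n = 2^{\widehat{n}} - 1 = 2^k - 1$; when $\widecheck{n}>0$ one has $n \leq 2^{\widehat{n}+1} - 2 < 2^{\widehat{n}+1} - 1 = 2^k - 1$. Hence I may pick $n$ distinct non-zero vectors $v_1, \dots, v_n \in \mathcal{Q}_k$, chosen to include a basis of $\mathcal{Q}_k$, and define an $\mathbb{F}_2$-linear surjection $H\colon \mathcal{Q}_n \to \mathcal{Q}_k$ by sending the $i$-th standard basis vector to $v_i$.

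Next I would set $C := \ker H$, so that $|C| = 2^{n-k} = 2^{n - \widehat{n} - \one(\widecheck{n} > 0)}$ by surjectivity of $H$. To show $C$ is $3$-separated, take two distinct $x, y \in C$ and note that $z = x - y$ is a non-zero element of $\ker H$. If $\|z\| \leq 2$ then $Hz$ is either a single column $v_i$ or a sum $v_i + v_j$ with $i \ne j$, and both are non-zero by our choice of columns as distinct non-zero vectors. Hence $\|z\| \geq 3$, establishing the lower bound $\lambda(n) \geq 2^{n - \widehat{n} - \one(\widecheck{n} > 0)}$.

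For the upper bound in the case $\widecheck{n}=0$, I would invoke the standard sphere-packing estimate: any $3$-separated set $C \subseteq \mathcal{Q}_n$ has pairwise disjoint closed unit balls (otherwise two codewords would lie within distance $2$), and each such ball contains $n+1$ points. Thus $|C|(n+1) \leq 2^n$, which for $n = 2^{\widehat{n}} -1$ gives $|C| \leq 2^n / 2^{\widehat{n}} = 2^{n - \widehat{n}}$, matching the lower bound. The construction is classical, so there is no real obstacle beyond correctly bookkeeping the size of $k$ against the number of available non-zero columns; once $n \leq 2^k - 1$ is verified, the rest is routine linear algebra over $\mathbb{F}_2$.
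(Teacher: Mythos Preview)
Your argument is correct. It is, however, a genuinely different construction from the one in the paper. You build a (shortened) Hamming code directly: choose $n\le 2^{k}-1$ distinct nonzero columns for a parity-check map $H\colon\mathcal{Q}_n\to\mathcal{Q}_k$, take its kernel, and read off both the size $2^{n-k}$ and the $3$-separation from the fact that no one or two columns sum to zero. The paper instead handles $\widecheck{n}=0$ by observing that the iterated doubling construction of Lemma~\ref{propagate} already produces a perfect $1$-code (hence $3$-separated) of the right size, and then derives the case $\widecheck{n}>0$ from a separate growth inequality $\lambda(p+q)\ge 2^{q-1}\lambda(p)$, proved via the auxiliary set $\{(x,x+y):y\in D,\ \pi(x)=0\}$. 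Your route is more self-contained and arguably cleaner for this lemma, since it does not lean on Lemma~\ref{propagate} or Corollary~\ref{graph}; the paper's route has the side benefit of isolating the inequality $\lambda(p+q)\ge 2^{q-1}\lambda(p)$, though in the end it is only used here. Your sphere-packing upper bound for $\widecheck{n}=0$ makes explicit what the paper leaves implicit in the word ``obvious.'' One small point worth stating for completeness: to guarantee that the chosen columns can include a basis of $\mathcal{Q}_k$ you need $n\ge k$, which does hold for all $n\ge 1$ (check $2^{\widehat{n}}-1\ge\widehat{n}$ and $2^{\widehat{n}}\ge\widehat{n}+1$) but is not written out in your proposal.
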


\begin{proof}
The case $\widecheck{n}=0$ is obvious as the minimal dominating sets produced using Lemma \ref{propagate} are $3$-separating. The case $\widecheck{n}>0$ follows immediately from the identity
\begin{equation}\label{lambdaineq} \lambda(p+q)\geq 2^{q-1} \lambda(p)  \end{equation}
whenever $1 \leq q \leq p$. To check this, suppose $D$ is a $3$-separated subset of $\mathcal{Q}_p$ with $|D| = \lambda(p)$ and define a subset of $\mathcal{Q}_{p+q}$ by
\[ \tilde{D} = \{  ( x, x+ y) \big| y \in D,  x \in\mathcal{Q}_q, \pi(x)=0 \}.\]
Here, we are implicitly embedding $\mathcal{Q}_q$ into $\mathcal{Q}_p$ by $ x \mapsto x \cdot 0\cdots 0$. We claim $\tilde{D}$ is also 3-separated. For $z = (x,x+y), w =(u,u+v) \in \tilde{D}$ then 
\[ \|z+w \| = \| x+u\| + \| x + y+u+v\|\]
If $x \ne u$ , then $\|x +u\| \geq 2$ as they are both even. But since $y,v \in D$ they are either equal or distance $\geq 3$ apart. If they are equal then clearly the second term has magnitude $\geq 2$. If they aren't equal,  the only way the second term can vanish is if $\|x+u\| \geq 3$.  In any combination, $\|z+w\| \geq 3$.

If $x =u$, then $\|z+w\| = \|y+v\|$. But then either $y=v$ or $\|y+v\|\geq 3$, which completes the proof.

\end{proof}


The guiding philosophy is thus that an optimal $m$-decomposition $S$ will favor larger values of $s$ over smaller.

We are now ready to produce some bounds. Instead of producing a cumbersome general formula, we shall illustrate using specific cases. The calculations are all elementary and the method is easily extendible to any $\widehat{n}>0$. Our first step, however, is the following easily checked observation. 

\begin{lemma}\label{decoms}
Let $(\theta,\xi,\psi,\xi^\prime)  = \Theta^{(2)}(2^{\widehat{n}})$. 
\begin{itemize}
\item For  $0 \leq m < \xi^\prime$,  set \[S_m  = \{ \xi^\prime-m\} \cup \{ \psi+1,\psi+2,\dots, \theta-1\}.\] 
\item For $ \xi^\prime  \leq m \leq \psi$, set
\[ S_m =  \{ \psi+1 - m +\xi^\prime\} \cup \{ \psi+2,\dots, \theta-1\}.\]
\end{itemize}
Then $S_m$ is an $m$-decomposition of $2^{\widehat{n}}-1$ satisfying $ m < \theta - \left| S_m \right|$.
\end{lemma}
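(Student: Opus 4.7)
The claim is entirely a bookkeeping verification, so the plan is to check the three defining conditions of an $m$-decomposition in each of the two cases, relying on the telescoping identity for the sum $\sum_{k=a}^{b} k = \binom{b+1}{2}-\binom{a}{2}$ and the two identities
\[
\binom{\theta+1}{2}=2^{\widehat{n}}+\xi,\qquad \binom{\psi+1}{2}=\xi+\xi'
\]
coming from the definition of $\Theta^{(2)}(2^{\widehat{n}})$. These will make everything else fall out.

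First I would verify that $S_m\subseteq\{1,\dots,\theta-1\}$ with distinct elements. In Case $1$ the block $\{\psi+1,\dots,\theta-1\}$ is already admissible, so I only need $1\le \xi'-m\le \psi$ (so that the extra element is genuinely distinct from the block). Since $0\le m<\xi'$ and $\xi'<\psi+1$, both bounds are immediate. In Case $2$ the block $\{\psi+2,\dots,\theta-1\}$ is admissible, and I need $1\le \psi+1-m+\xi'\le \psi+1$; the upper inequality uses $m\ge \xi'$ and the lower uses $m\le \psi\le \psi+\xi'$.

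Next I would check the sum condition $\theta+m+\sum S_m = 2^{\widehat{n}}$. The plan is to compute $\sum_{k=\psi+1}^{\theta-1}k = \binom{\theta}{2}-\binom{\psi+1}{2} = 2^{\widehat{n}}-\theta-\xi'$ using the two identities above (note $\binom{\theta}{2}=\binom{\theta+1}{2}-\theta$). Adding the lone extra term $\xi'-m$ in Case $1$ gives $\sum S_m = 2^{\widehat{n}}-\theta-m$, and the same arithmetic in Case $2$ (after subtracting off the extra $\psi+1$ from the truncated block and adding the lone term $\psi+1-m+\xi'$) again produces $2^{\widehat{n}}-\theta-m$. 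In either case, adding $\theta+m$ yields $2^{\widehat{n}}=l+1$, as required.

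Finally I would check $m<\theta-|S_m|$. In Case $1$, $|S_m|=\theta-\psi$, so the inequality becomes $m<\psi$, which follows from $m<\xi'\le \psi$. In Case $2$, $|S_m|=\theta-\psi-1$, so the inequality becomes $m<\psi+1$, which is exactly the assumed upper bound $m\le \psi$. There is no serious obstacle here; the only point that requires the definition of $\Theta^{(2)}$ in an essential way is collapsing $\binom{\theta}{2}-\binom{\psi+1}{2}$ to $2^{\widehat{n}}-\theta-\xi'$, and that is immediate from the two identities.
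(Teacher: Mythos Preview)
Your verification is correct and is precisely the routine check the paper has in mind; the authors give no proof beyond calling the lemma an ``easily checked observation,'' and your use of the telescoping identity together with $\binom{\theta+1}{2}=2^{\widehat{n}}+\xi$ and $\binom{\psi+1}{2}=\xi+\xi'$ is exactly the intended bookkeeping. The only implicit point you might make explicit is that $\psi\le\theta-2$ (so that the blocks and the cardinality counts behave as written), but this follows readily from $\binom{\psi}{2}<\xi<\theta$ for the relevant range of $\widehat{n}$.
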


 Throughout the examples below, we shall use the decompositions from this lemma and the lower bounds on $\lambda$ from Lemma \ref{lambdabound} as a stand in for the actual values of $\lambda$ when calculating the product in Theorem \ref{chimain}. All this means is that the bounds obtained might be (modestly) improved if and when we refine our lower bounds on $\lambda$ or calculate it exactly. The reader is cautioned that the first few cases are included to aid familiarity with the method while the calculations can be still done easily in your head. For small $\widehat{n}$, our bounds fail to improve on those already known.

\subsection{ $\widehat{n} =3$}

Compute $\Theta^{(2)}(8) = (4,2,2,1)$. Then from the observation above, we can set $S_0 =\{1,3\}$, $S_1 =\{3\}$, $S_2=\{2\}$. All  $m\geq 3$ will automatically fail the condition $m<\theta-|S|$ and hence are considered out of range. From Theorem \ref{chimain},   we see
\[ \chi_{11} \geq 2^{-7}, \quad \chi_{12} \geq 2^{-6}.\]
We see no further improvement at $m=2$ as $\lambda(3)=2$ but $\lambda(2)=1$, thus $\prod_{S_m} \lambda(s)$ drops by $1/2$ at this level, canceling any gain. Thus this method yields $\chi_{12}  \geq 2^{-6}$.

\subsection{ $\widehat{n} =4$}

Compute $\Theta^{(2)}(16) = (6,5,3,1)$. Using Lemma \ref{decoms}, we choose $S_0 =\{1,4,5\}$, $S_1 =\{4,5\}$,  $S_2=\{3,5\}$,  and $S_3=\{2,5\}$. All $m\geq4$ are out of range. From Theorem \ref{chimain},  we see
\[ \chi_{21} \geq 2^{-13}, \quad \chi_{22} \geq 2^{-12},\quad \chi_{23} \geq 2^{-11}.\]
$\lambda(4)=\lambda(3)=2$, so we do obtain a doubling at the $m=2$ stage. However $\lambda(2)=1$, so we see no further improvement at $m=3$..

\subsection{ $\widehat{n} =5$}

Compute $\Theta^{(2)}(32) = (8,4,3,2)$. Using Lemma \ref{decoms}, we choose  $S_0 =\{2,4,5,6,7\}$, $S_1 =\{1,4,5,6,7\}$,  $S_2=\{4,5,6,7\}$, and $S_3=\{3,5,6,7\}$. All $m\geq4$ are out of range. From Theorem \ref{chimain},  we see
\[ \chi_{39} \geq 2^{-22}, \quad \chi_{40} \geq 2^{-21}, \quad \chi_{41} \geq 2^{-20}, \quad \chi_{42} \geq 2^{-19}.\]
In this case $\prod_{S_m} \lambda(s)$ is constant over the range $0 \leq m \leq 3$ so we see improvements at each stage.

\subsection{ $\widehat{n} =6$}

At this stage, we finally begin to see values of $n$ for which no improvement was previously known.

Compute $\Theta^{(2)}(64) = (11,2,2,1)$. Using Lemma \ref{decoms}, we choose  $S_0 =\{1,3,\dots, 10\}$,  $S_1 =\{3,\dots,10\}$ and $S_2 =\{2,4\dots,10\}$. All $m\geq3$ are out of range. From Theorem \ref{chimain},  we see 
\[ \chi_{74} \geq 2^{-38}, \quad \chi_{75} \geq 2^{-37}.\]
As $\lambda(2) =1$, while $\lambda(3)=2$, there is no improvement at $m=2$. At the time of writing, these are the best known bounds. In fact, $\chi_{76},\dots,\chi_{78}  \geq 2^{-37}$ are also the current best. However $79 \in W(9)$ and so the known sharp bound on $n=9$ propagates up to yield $\chi_{79} \geq 2^{-5}$.

\subsection{ $\widehat{n} =7$}

Compute $\Theta^{(2)}(128) = (16,8,4,2)$. Using Lemma \ref{decoms}, we choose $S_0 =\{2,5,\dots, 15\}$, $S_1 =\{1,5,\dots,15\}$,  $S_2=\{5,\dots,15\}$, $S_3 =\{4,6,\dots,15\}$, and $S_4 =\{3,6,\dots,15\}$. All $m\geq 5$ are out of range.  From Theorem \ref{chimain},  we see
\[ \chi_{143} \geq 2^{-59}, \quad \chi_{144} \geq 2^{-58}, \quad \chi_{145} \geq 2^{-57}, \quad \chi_{147} \geq 2^{-56}.\]
$\lambda(5) = 4 = 2 \lambda(4)$, so we see no improvement at the $m=3$ stage. The best bound on $\chi_{146} \geq 2^{-57}$ which it inherits from $n=145$. However $\lambda(3)=\lambda(4)=2$, so we do see an improvement again at $m=4$.  Again, these are the best known bounds at these levels. We also note that $149 \in W(74)$ and so our bounds at the previous level propagate up: $\chi_{149}, \chi_{150} \geq 2^{-38}$, and $\chi_{151},\dots,\chi_{158} \geq 2^{-37}$.

\subsection{ $\widehat{n} =8$}

Compute $\Theta^{(2)}(256) = (23,20,6,1)$. Using Lemma \ref{decoms}, we choose  $S_0 =\{1,7\dots, 22\}$, $S_1 =\{7,\dots,22\}$,  $S_2=\{6,8,\dots,22\}$, $S_3 =\{5,8,\dots,22\}$, $S_4 =\{4,8,\dots,22\}$, $S_5 =\{3,8,\dots,22\}$,  and $S_6 =\{2,8,\dots,22\}$. All $m\geq 7$ are out of range.   From Theorem \ref{chimain},  we see
\[ \chi_{278} \geq 2^{-94}, \quad \chi_{279} \geq 2^{-93}, \quad \chi_{283} \geq 2^{-92}.\]

For $m=2,\dots, 4$, we simply inherit the $2^{-93}$ bound from $n=279$. However, for $m=5$, we see an improvement again, but there is no improvement at $m=6$.

\subsection{ $\widehat{n} >8$} For any fixed $\widehat{n}$ the computations involved are elementary and  require nothing computationally intensive. Indeed, bounds could be calculated with pen and paper significantly past the point where we have stopped. Although a general formula is fiddly to the point of being unworkable, this algorithm would be straightforward to code.

A couple of remarks are warranted. First, these bound improvements are significant for large $n$. For example, $\gamma_{278} \leq  2^{255} - 2^{161}$,  an improvement on the order of $10^{48}$ over the generic bound. However,  it does have to be said that these bound improvements are small in a relative sense. They are unlikely to be sharp, even in the cases where they are the best known. The values of $\chi_n$ that this theorem produces rapidly approach $0$ as $\widehat{n} \to \infty$.  If $n$ can be shown to lie inside $W(p)$  for some integer $p$ where an alternate bound improvement is known, then these methods are unlikely to produce a superior result. However, it is straightforward to see that, asymptotically,  $\theta  =\Omega (2^{\widehat{n}/2})$.  The consequence of this is that  this technique applies to an infinite number of $n$ lying outside the domination wedge $W(p)$ for any fixed $p$.  Thus, no matter how many individual bounds can be numerically computed for fixed values of $n$ via simulated annealing or other computer-assisted techniques, this methodology would still produce the best known bounds in infinitely many cases.   Prior to this paper, every improved bound that the authors are aware of occurred within $W(9)$.  Therefore, our methods can be used to find the best-known explicit upper bounds for any $n$ within the range $ \theta({\widehat{n}}) \leq {\widecheck{n}} < 2^{{\widehat{n}} -2}$. Figure \ref{best} illustrates this phenomenon. The new improved bounds appear in a 'curved wedge' lying above those previous established. The height of this wedge grows exponentially with $\widehat{n}$.

Finally, a minimal dominating set for $\mathcal{Q}_9$ was found in \cite{Wille96} by a computationally intensive simulated annealing technique. The set, explicitly written out in that paper,  can only be described as wild. There are few discernible, consistent patterns and the set possesses odd groupings of adjacent vertices. It feels less like something that can be built from an intuitive construction and more like the result of vertices being eliminated after a twisting of a more structured set, in the manner of Theorem \ref{Smain}. However, removing the condition $\widecheck{n} \geq \theta$ would require a more sophisticated construction than the one presented here.

\begin{figure}
\caption{The best known lower bounds on $\chi_n$ for  $n \leq 511$.  Bounds of the form $2^{-a}$ are new in this paper. Citations for all others can be found earlier in the text. Upper bounds on $\gamma_n$ can be found from the identity $\gamma_n \leq 2^{n- \widehat{n}}  (1 - \chi_n)$.} 
\label{best}
\begin{center}
\bgroup
\scriptsize
\setlength{\tabcolsep}{0.6em}
\begin{tabular}{|cc|cccccccc|}
\hline 
 & & \multicolumn{8}{c|}{${\widehat{n}}$} \\
& & 1  & 2 & 3 &4 & 5 & 6 & 7 &8 \\
\hline
& 0 &0&0& 0& 0& 0& 0&0& 0  \\
& 1& 0 & 0 & 0& 0& 0& 0& 0 & 0\\
& 2& \mynull & 1/8 & 1/32 &0 &0 & 0& 0  &0\\
& 3& \mynull &  1/4 & 1/16 &0 &0 &0& 0 &0  \\
& 4& \mynull & \mynull &  1/4 &1/32&0 & 0& 0 &0\\
& 5& \mynull & \mynull & 33/128 &1/32&0& 0& 0&0\\
& 6--7& \mynull & \mynull & 5/16 &1/16&0&0& 0&0 \\
& 8--9& \mynull & \mynull & \mynull &1/4& 1/32&0& 0&0 \\
& 10& \mynull & \mynull & \mynull &33/128&1/32&0& 0&0\\
& 11& \mynull & \mynull & \mynull &33/128&1/32& $  2^{-38} $ & 0&0\\
&12--15& \mynull & \mynull & \mynull & 5/16&1/16& $  2^{-37} $ & 0&0\\
& 16& \mynull & \mynull & \mynull & \mynull &1/4& 1/32 & $2^{-69}$& 0 \\
& 17& \mynull & \mynull & \mynull & \mynull &1/4& 1/32 & $2^{-68}$& 0 \\
& 18-19& \mynull & \mynull & \mynull & \mynull &1/4&1/32  & $2^{-67}$& 0 \\
 & 20--21& \mynull & \mynull & \mynull & \mynull & 33/128& 1/32 & $2^{-66}$& 0 \\
& 22& \mynull & \mynull & \mynull & \mynull & 33/128&1/32  & $  2^{-38} $& 0 \\
& 23& \mynull & \mynull & \mynull & \mynull & 33/128&1/32  & $  2^{-38} $& $2^{-93} $\\
& 24& \mynull & \mynull & \mynull & \mynull &5/16& 1/16&  $  2^{-37} $& $2^{-94}$\\
& 25--27& \mynull & \mynull & \mynull & \mynull &711/2048& 1/16&  $  2^{-37} $& $2^{-94}$\\
& 28-29& \mynull & \mynull & \mynull & \mynull &711/2048&1/16 & $  2^{-37} $& $2^{-91}$\\
& 30--31& \mynull & \mynull & \mynull & \mynull &3/8& 1/16& $  2^{-37} $& $2^{-91}$\\
${\widecheck{n}}$  & 32--33& \mynull & \mynull & \mynull & \mynull & \mynull & 1/4 & 1/32& $2^{-69}$\\
& 34--35& \mynull & \mynull & \mynull & \mynull & \mynull & 1/4 & 1/32& $2^{-68}$ \\
& 36--39& \mynull & \mynull & \mynull & \mynull & \mynull & 1/4 & 1/32& $2^{-67}$ \\
& 40--43& \mynull & \mynull & \mynull & \mynull & \mynull & 33/128& 1/32& $2^{-66}$\\
& 44-47& \mynull & \mynull & \mynull & \mynull & \mynull &  33/128& 1/32  & $  2^{-38} $\\
& 48--49& \mynull & \mynull & \mynull & \mynull & \mynull & 5/16&  1/16& $2^{-37}$\\
& 50--59& \mynull & \mynull & \mynull & \mynull & \mynull & 711/2048&  1/16& $2^{-37}$ \\
& 60--63& \mynull & \mynull & \mynull & \mynull & \mynull &3/8&  1/16 & $2^{-37}$\\
& 64--79& \mynull & \mynull & \mynull & \mynull & \mynull & \mynull & 1/4 & 1/32\\
& 80--95& \mynull & \mynull & \mynull & \mynull & \mynull & \mynull & 33/128 & 1/32\\
& 96--99& \mynull & \mynull & \mynull & \mynull & \mynull & \mynull & 5/16 & 1/16\\
& 100--119& \mynull & \mynull & \mynull & \mynull & \mynull & \mynull & 711/2048 & 1/16\\
& 120--127& \mynull & \mynull & \mynull & \mynull & \mynull & \mynull & 3/8 & 1/16\\
& 128--159& \mynull & \mynull & \mynull & \mynull & \mynull & \mynull & \mynull & 1/4\\
&160--187& \mynull & \mynull & \mynull & \mynull & \mynull & \mynull & \mynull & 33/128 \\
& 192-199& \mynull & \mynull & \mynull & \mynull & \mynull & \mynull & \mynull & 5/16 \\
& 200--239& \mynull & \mynull & \mynull & \mynull & \mynull & \mynull & \mynull & 711/2048 \\
& 240--255& \mynull & \mynull & \mynull & \mynull & \mynull & \mynull & \mynull & 3/8 \\
\hline
\end{tabular}
\egroup
\end{center}
\end{figure}

  \bibliography{Domination}{}
\bibliographystyle{elsarticle-num} 
\end{document}